\newtheorem{theorem}{Theorem}[section]
\newtheorem{proposition}[theorem]{Proposition}
\newtheorem{corollary}[theorem]{Corollary}
\theoremstyle{definition}
\newtheorem{definition}[theorem]{Definition}
\newtheorem{question}[theorem]{Question}
\newtheorem{claim}{Claim}
\numberwithin{equation}{section}
\theoremstyle{definition}
\newtheorem{remark}[theorem]{Remark}
\newtheorem{remarks}[theorem]{Remarks}
\newcommand{\into}{\hookrightarrow}
\renewcommand{\Mc}{\mathcal{M}}
\newcommand{\Oc}{\mathcal{O}}
\newcommand{\Sc}{\mathcal{S}}
\newcommand{\iso}{\cong}
\newcommand{\Ab}{\mathbb{A}}
\newcommand{\A}{\mathbb{A}}
\newcommand{\Cb}{\mathbb{C}}
\newcommand{\C}{\mathbb{C}}
\newcommand{\Eb}{\mathbb{E}}
\newcommand{\Fb}{\mathbb{F}}
\newcommand{\F}{\mathbb{F}}
\newcommand{\Lb}{\mathbb{L}}
\newcommand{\Pb}{\mathbb{P}}
\newcommand{\Qb}{\mathbb{Q}}
\newcommand{\Q}{\mathbb{Q}}
\newcommand{\Zb}{\mathbb{Z}}
\newcommand{\Z}{\mathbb{Z}}
\newcommand{\Fqbar}{\overline{\Fb}_q}
\newcommand{\et}{{et}}
\newcommand{\Aff}{\mathbf{Aff}}
\newcommand{\Ratb}{Rat^\ast}
\newcommand{\Rat}{Rat}
\newcommand{\Poly}{Poly}
\DeclareMathOperator{\PGL}{PGL}
\DeclareMathOperator{\Var}{Var}
\DeclareMathOperator{\PConf}{PConf}
\DeclareMathOperator{\ev}{ev}
\DeclareMathOperator{\Frob}{Frob}
\DeclareMathOperator{\Gal}{Gal}
\DeclareMathOperator{\Fix}{Fix}
\DeclareMathOperator{\tr}{Trace}
\title{Topology and arithmetic of resultants, I}
\author{Benson Farb and Jesse Wolfson}
\thanks{B.F. is supported in part by NSF Grant Nos. DMS-1105643 and DMS-1406209. J.W. is supported in part by NSF Grant No. DMS-1400349.}
\begin{document}
\maketitle

\begin{abstract}
    We consider the interplay of point counts, singular cohomology, \'etale cohomology, eigenvalues of the Frobenius and the Grothendieck ring of varieties for two families of varieties: spaces of rational maps and moduli spaces of marked, degree $d$ rational curves in $\Pb^n$.  We deduce as special cases algebro-geometric and arithmetic refinements of topological computations of Segal, Cohen--Cohen--Mann--Milgram, Vassiliev and others.
\end{abstract}

 \section{Introduction}
The starting point of this paper is the idea that topological theorems about algebraic varieties should have algebro-geometric proofs,  and that such proofs should yield arithmetic information.   More precisely, suppose that $X$ is a (not necessarily projective) algebraic variety defined over $\Z$.  We can then either extend scalars or reduce modulo a prime
$p$ in order to view $X$ through three lenses:

\bigskip
\noindent
{\bf Topological: }The $\C$-points $X(\C)$ form a complex algebraic variety.  Attached to $X(\C)$ are its compactly supported singular cohomology groups $H^*_c(X(\C);\C)$ and Betti numbers $b_i(X(\C))$.

\bigskip
\noindent
{\bf Geometric: }  Let $q=p^d$ be any positive power of $p$.  The $\Fqbar$-points $X(\Fqbar)$, where $\Fqbar$ is the algebraic closure of the finite field $\F_q$.    The set $X(\Fqbar)$ comes equipped with an action of the Frobenius $\Frob_q$, acting on the coordinates of affine charts via $x\mapsto x^q$.
Attached to this setup we have:
\begin{itemize}
\item The associated compactly supported \'{e}tale cohomology groups $H^\ast_{\et,c}(X/\Fqbar;\Qb_\ell)$.  These are representations of the Galois  group $\Gal(\Fqbar/\F_q)$.

\item The eigenvalues of $\Frob_q$ acting on  $H^\ast_{\et,c}(X/\Fqbar;\Qb_\ell)$; these are called {\em weights}.
\end{itemize}

\bigskip
\noindent
{\bf Arithmetic: } The set  $X(\F_q)$ of $\F_q$-points.   As observed by Hasse, this set can be realized as the fixed set of $\Frob_q: X(\Fqbar)\to X(\Fqbar)$.

 \bigskip
 In this paper we consider the interplay of these three viewpoints applied in two concrete situations: spaces of rational maps and moduli spaces of marked, degree $d$ rational curves in $\Pb^n$.
% The key technique is to build an algebraically defined filtration of the variety $X$ in question; to use this filtration to compute the class of $X$ in the Grothendieck ring $K_0(\Var_K)$; to apply this to obtain a count of the $\F_q$ points; and to use this to compare \'{e}tale and singular cohomology and to compute eigenvalues of Frobenius.
We obtain as special cases algebro-geometric and arithmetic refinements of topological computations of Segal, Cohen--Cohen--Mann--Milgram, Vassiliev and others.  Our main inspiration comes from the ideas in Segal's beautiful paper \cite{Se}.

 \bigskip
 \noindent
 {\bf Spaces of rational maps. }In this paper we consider the following families of spaces of (tuples of) polynomials.

\begin{definition}[{\boldmath $\Poly_n^{d,m}$}]
\label{definition:poly}
Fix a field $K$ with algebraic closure denoted $\overline{K}$.  Fix $d,n\geq 0$ and $m\geq 1$.   Define $\Poly_{n}^{d,m}$ to be the set of all
$m$-tuples $(f_1,\ldots ,f_m)$ of polynomials $f_i\in K[z]$ such that:
\begin{enumerate}
\item Each $f_i$ is monic of degree $d$.
\item The set of polynomials $\{f_1,\ldots ,f_m\}$ has no common root in $\overline{K}$ of multiplicity $n$ or greater.
\end{enumerate}
\end{definition}

The classical theory of discriminants and resultants tells us that $\Poly_n^{d,m}$ is an algebraic variety defined over $\Z$; see \S\ref{section:polyproof} below.  The varieties $\Poly_n^{d,m}$ include well-known classes of varieties as special cases. These include the following examples.
\begin{enumerate}
\item  $\Poly_1^{d,m}$ can be identified with the space $\Rat_{d,m-1}^*$ of all degree $d$ rational maps $\phi: \Pb^1\to \Pb^{m-1}$ that take a given basepoint in $\Pb^1$ to a given
basepoint in $\Pb^{m-1}$.  This is so because  an $m$-tuple of degree $d$ polynomials $(f_0(z),\ldots ,f_{m-1}(z))$  determines, and is determined by, a {\em rational map} $\phi:\Pb^1\to\Pb^{m-1}$ such that $\phi(\infty)=[1:\cdots:1]$ and $\phi^*\Oc(1)\cong \Oc(d)$ via
 \[\phi(z):=[f_0(z):\cdots : f_{m-1}(z)].\]

\item $\Poly_2^{d,1}$ is the space of monic, degree $d$, square-free polynomials, studied in detail by Arnol'd \cite{Arn} and many others.  More generally, $\Poly_n^{d,1}$ is the space of degree $d$ polynomials with no root of multiplicity $n$.  These spaces were studied in detail by Vassiliev \cite{Va} and others.
\end{enumerate}

 In the theorem that follows, $K_0(\Var_K)$ denotes the Grothendieck ring of $K$-varieties and $\Lb:=[\Ab^1] \in K_0(\Var_K)$ denotes the Lefschetz motive; see \S\ref{section:background}.

\begin{theorem}[{\bf Arithmetic of \boldmath$\Poly_n^{d,m}$}]
\label{thm:poly1}
    Let $m\ge 1$ and $d\ge n\ge 1$.
    \begin{enumerate}
        \item \label{thm:points} {\bf Motive/point count: } When ${\rm char}(K)=0$ then
\[[\Poly_n^{d,m}]=\Lb^{dm}-\Lb^{(d-n)m+1}\ \ \text{in}\ \ K_0(\Var_K).\]
                                                Further, for any prime power $q$:
\[|\Poly_n^{d,m}(\Fb_q)|=q^{dm}-q^{(d-n)m+1}.\]

        \item {\bf Betti numbers : }
            \begin{equation*}
                b_i(\Poly_n^{d,m}(\Cb))=\left\{
                    \begin{array}{ll}
                    1 & i=0,~2nm-3\\
                    0 & \text{else}
                    \end{array}\right.
            \end{equation*}
        \item {\bf Comparison Theorem: } There are isomorphisms of graded vector spaces
            \begin{equation*}
                H^\ast_{\et,c}(\Poly_{n/\Fqbar}^{d,m};\Qb_\ell)\otimes_{\Qb_\ell}\Cb\cong H^\ast_c(\Poly_n^{d,m}(\Cb);\Cb),
            \end{equation*}
        \item \label{thm:evalues} {\bf Weights: } There are isomorphisms of Galois representations
            \begin{equation*}
                H^i_{et,c}(\Poly_{n/_{\Fqbar}}^{d,m};\Qb_\ell)\cong\left\{
                    \begin{array}{ll}
                        \Qb_\ell((n-d)m-1) & i=2(d-n)m+3\\
                        \Qb_\ell(-dm) & i=2dm\\
                        0 & \text{else}
                    \end{array}\right.
            \end{equation*}
    \end{enumerate}
\end{theorem}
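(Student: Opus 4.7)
My approach is to establish a stratification of the ambient space $V^d := \Ab^{dm}$ of $m$-tuples of monic degree-$d$ polynomials and use it to prove (1) combinatorially; I would then upgrade this calculation to \'etale cohomology with Galois action to obtain (4), from which (2) and (3) follow formally. The geometric input is that for each $(f_1,\dots,f_m) \in V^d$ there is a unique monic polynomial $p \in K[z]$, of some degree $k \in \{0,1,\dots,\lfloor d/n\rfloor\}$, maximal with respect to the condition $p^n \mid f_i$ for all $i$; writing $f_i = p^n h_i$ then yields $(h_1,\dots,h_m) \in \Poly_n^{d-nk,m}$. Letting $V^d_k$ be the locus of tuples with this invariant equal to $k$, the assignment $(p,(h_i)) \mapsto (p^n h_i)$ defines an isomorphism of varieties
$$\Ab^k \times \Poly_n^{d-nk,m} \;\stackrel{\sim}{\longrightarrow}\; V^d_k,$$
with $V^d_0 = \Poly_n^{d,m}$. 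This yields the decomposition $V^d = \bigsqcup_{k=0}^{\lfloor d/n\rfloor} V^d_k$.

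I would prove (1) by strong induction on $d$. From the stratification, $\Lb^{dm} = [V^d] = \sum_k \Lb^k [\Poly_n^{d-nk,m}]$ in $K_0(\Var_K)$. The base case $d<n$ is trivial since then $\Poly_n^{d,m}=V^d$. In the inductive step, isolating $[\Poly_n^{d,m}]$ and substituting the inductive hypothesis produces an alternating sum that telescopes to $\Lb^{dm} - \Lb^{(d-n)m+1}$. The $\F_q$-point count then follows by applying the counting homomorphism $\Lb \mapsto q$.

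For (4), I would run the analogous induction in $H^\ast_{\et,c}$ with Galois coefficients. By K\"{u}nneth and the inductive hypothesis, each $H^\ast_{\et,c}(V^d_k)$ for $k \ge 1$ is a direct sum of at most two one-dimensional Tate-twisted pieces in explicit degrees. Using the iterated long exact sequences for the stratification of $\Sigma^d := \bigsqcup_{k \ge 1} V^d_k$, one assembles these pieces into $H^\ast_{\et,c}(\Sigma^d)$. Purity forces any connecting map between pieces of different Tate twist to vanish, while consistency with the motivic calculation above forces the remaining connecting maps (between matched pairs of equal Tate twist) to be isomorphisms. The outcome is that $H^\ast_{\et,c}(\Sigma^d)$ is concentrated in degree $2(d-n)m+2$ with value $\Qb_\ell(-(d-n)m-1)$. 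Combined with $H^\ast_{\et,c}(V^d) = \Qb_\ell(-dm)[-2dm]$ and the long exact sequence for the open--closed decomposition $V^d = \Poly_n^{d,m} \sqcup \Sigma^d$, this yields (4). Part (3) follows from Artin's comparison theorem for the smooth $\Z$-variety $\Poly_n^{d,m}$, and (2) is immediate upon taking $\C$-dimensions and applying Poincar\'{e} duality.

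The main obstacle is the cohomological bookkeeping in (4): tracking Tate twists and degrees across many iterated long exact sequences is delicate, even though the underlying pattern parallels the motivic telescope. An alternative route I would pursue in parallel is to cite Segal, Cohen--Cohen--Mann--Milgram, and Vassiliev for the Betti numbers (2) directly, and then deduce (4) from (1), (2), and Deligne's purity theorem: given that each $H^i_{\et,c}$ is one-dimensional in the stated degrees, matching the alternating Frobenius trace to the known point count uniquely determines the Frobenius eigenvalue on each piece, hence its Tate twist.
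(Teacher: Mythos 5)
Your stratification of $\A^{dm}$ by the maximal degree of a common $n$-th power factor and the resulting telescope computation for part (1) coincide with the paper's argument (your $V^d_k$ is the paper's $R^{d,m}_{n,k}\setminus R^{d,m}_{n,k+1}$, and the paper also spends considerable effort verifying, over a general field and not just on $\bar K$-points, that the map $\A^k\times\Poly^{d-nk,m}_n\to V^d_k$ is a scheme isomorphism and that the $R^{d,m}_{n,k}$ are closed, via a root-cover/quotient argument you elide). Your ``alternative route'' to (4) is essentially the paper's Step~6. However, two genuine gaps remain.

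First, part (3) does not follow from Artin's comparison theorem. Artin compares $H^\ast_{\et,c}$ of the variety over $\Cb$ with singular cohomology of its $\Cb$-points; it says nothing about the passage from characteristic $0$ to characteristic $p$. For a non-proper affine variety with no evident good compactification over $\Z$, the comparison $H^\ast_{\et,c}(\Poly^{d,m}_{n/\Fqbar};\Qb_\ell)\cong H^\ast_{\et,c}(\Poly^{d,m}_{n/\Cb};\Qb_\ell)$ is a real obstacle, not a formality. The paper handles it by realizing $\Poly_n^{d,m}$ over $\bar K$ as the $(S_d)^m$-quotient of the complement of an explicit linear subspace arrangement and invoking Bj\"orner--Ekedahl's theorem that the $\ell$-adic cohomology of such a complement is a functor of the intersection lattice with its dimension function, which is visibly characteristic-independent; transfer then gives the statement for the quotient. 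Without this (or a substitute), (3) is unproved, and since you derive (2) from (3) and (4), that chain breaks too.

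Second, your primary route to (4) via iterated long exact sequences is not rigorous as stated. Purity bounds which Tate twists can appear in each $H^i_{\et,c}$, but the assertion that ``consistency with the motivic calculation forces the remaining connecting maps to be isomorphisms'' overreaches: $[\Poly_n^{d,m}]=\Lb^{dm}-\Lb^{(d-n)m+1}$ determines only the alternating (Tate-twisted) Euler characteristic, and a pair of cancelling copies of $\Qb_\ell(-w)$ in adjacent cohomological degrees is invisible to it. You therefore cannot deduce the Betti numbers from the motive; an independent input is required. The paper supplies it with a Segal-style ``bringing zeroes in from infinity'' induction (its Step~5), which is the bulk of the proof. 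Citing Segal, Cohen--Cohen--Mann--Milgram and Vassiliev for (2) is legitimate (the theorem is presented as a refinement of their results), but note that this outsources the hardest input, and you still need a correct (3) to transport those topological Betti numbers to $\ell$-adic ranks before the Grothendieck--Lefschetz and Poincar\'e duality step determines the Frobenius eigenvalues.
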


Our approach to (3) of Theorem \ref{thm:poly1} is not the standard one -- by compactifying with normal crossings divisors.  Instead, we use the combinatorics of a ``root cover'' and the $\ell$-adic cohomology of linear subspace arrangements due to Bj\"{o}rner-Ekedahl \cite{BE}.

The numerics of Theorem \ref{thm:poly1} shows that two rather different-looking varieties have common arithmetic features.

\begin{corollary}[{\bf Comparing \boldmath$\Ratb_{d,n}$ and \boldmath$\Poly^{d(n+1),1}_{n+1}$}]
\label{cor:rat=poly}\mbox{}
    \begin{enumerate}
        \item When ${\rm char}(K)=0$ there are equalities in the Grothendieck ring of varieties
            \begin{align*}
                               [\Ratb_{d,n}]&=[\Poly^{d(n+1),1}_{n+1}].
            \end{align*}
            Further, $\Ratb_{d,n}$ and $\Poly^{d(n+1),1}_{n+1}$ have equal point counts over all finite fields.
        \item There are isomorphisms of graded Galois representations
            \begin{align*}
                               H^\ast_{et,c}(\Ratb_{d,n/_{\Fqbar}};\Qb_\ell)&\cong H^\ast_{et,c}(\Poly^{d(n+1),1}_{n+1/_{\Fqbar}};\Qb_\ell).
            \end{align*}
    \end{enumerate}
\end{corollary}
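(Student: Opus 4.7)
The plan is straightforward: recognize $\Ratb_{d,n}$ as a member of the $\Poly_n^{d,m}$ family and then deduce the corollary directly from Theorem \ref{thm:poly1}. Example (1) immediately following Definition \ref{definition:poly} supplies the identification $\Ratb_{d,n} = \Rat^*_{d,n} = \Poly_1^{d,n+1}$, so the problem reduces to comparing $\Poly_1^{d,n+1}$ with $\Poly_{n+1}^{d(n+1),1}$.

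The key observation is that the formulas in parts (\ref{thm:points}) and (\ref{thm:evalues}) of Theorem \ref{thm:poly1} depend on the triple $(n,d,m)$ only through the two combinations $dm$ and $(d-n)m$. Under the substitutions $(n,d,m)\mapsto(1,d,n+1)$ and $(n,d,m)\mapsto(n+1,d(n+1),1)$, one computes directly that
\[dm=d(n+1), \qquad (d-n)m=(d-1)(n+1)\]
in both cases, so all invariants in the theorem must coincide.

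For part (1) of the corollary, applying Theorem \ref{thm:poly1}(\ref{thm:points}) to each side gives
\[[\Ratb_{d,n}]=[\Poly_1^{d,n+1}] = \Lb^{d(n+1)}-\Lb^{(d-1)(n+1)+1}=[\Poly_{n+1}^{d(n+1),1}]\]
in $K_0(\Var_K)$, and equality of $\Fb_q$-point counts then follows by applying the point-counting specialization. For part (2), applying Theorem \ref{thm:poly1}(\ref{thm:evalues}) to each side shows that in both cases $H^\ast_{et,c}(\,\cdot\,/_{\Fqbar};\Qb_\ell)$ is concentrated in degrees $2d(n+1)$ and $2(d-1)(n+1)+3$, with Galois representations $\Qb_\ell(-d(n+1))$ and $\Qb_\ell((1-d)(n+1)-1)$ in these degrees respectively. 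This yields the claimed isomorphism of graded Galois representations.

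There is no substantive obstacle: the proof is a bookkeeping exercise of substituting into Theorem \ref{thm:poly1}. What the corollary records is the nontrivial coincidence that two geometrically different varieties --- a space of based degree-$d$ rational maps $\Pb^1\to\Pb^n$ and a space of single polynomials avoiding roots of multiplicity $\geq n+1$ --- have identical motivic classes and identical compactly supported $\ell$-adic cohomology as graded Galois representations, a coincidence which ultimately reflects the $(dm,(d-n)m)$-dependence noted above.
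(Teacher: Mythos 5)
Your proof is correct and is precisely the intended deduction: the corollary is stated without a separate proof because it follows from Theorem~\ref{thm:poly1} by exactly the substitution you describe. Your observation that the formulas in parts (\ref{thm:points}), (2), and (\ref{thm:evalues}) depend only on $dm$ and $(d-n)m$ (note that $nm = dm-(d-n)m$, so the Betti degree $2nm-3$ is also covered), combined with the check that both specializations $(1,d,n+1)$ and $(n+1,d(n+1),1)$ give $dm=d(n+1)$ and $(d-n)m=(d-1)(n+1)$, is a clean way to organize the bookkeeping.
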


These results have topological precursors. Cohen--Cohen--Mann--Milgram \cite{CCMM} showed that for any generalized homology theory $\Eb_\ast$,
\begin{equation*}
    \Eb_\ast(\Ratb_{d,1}(\Cb))\cong\Eb_\ast(\Poly^{2d,1}_2(\Cb)).
\end{equation*}
Building on this, Vassiliev \cite{Va} showed that
\begin{equation*}
    \Eb_\ast(\Ratb_{d,n}(\Cb))\cong\Eb_\ast(\Poly^{d(n+1),1}_{n+1}(\Cb)).
\end{equation*}

Guest--Kozlowski--Yamaguchi \cite{GKY} showed that, for $n>1$, Vassiliev's isomorphism is actually induced by a homotopy equivalence
\begin{equation*}
    \Ratb_{d,n}(\Cb)\simeq \Poly^{d(n+1),1}_{n+1}(\Cb).
\end{equation*}

Corollary \ref{cor:rat=poly} lifts these equivalences to the $\ell$-adic cohomology of the associated varieties over finite fields.  Are there algebraic explanations for all of this?

\begin{question}
\label{question:iso}
Are the varieties $\Ratb_{d,n}$ and $\Poly_{n+1}^{d(n+1),1}$ isomorphic for $n\geq 2$? If not, what invariant distinguishes them? \footnote{H. Spink and D. Tseng have recently answered this question in the negative; see \cite{ST}.}
\end{question}

We would further like to know the following.

\begin{question}
Does the equality $[\Poly_n^{d,m}]=\Lb^{dm}-\Lb^{(d-n)m+1}\ \ \text{in}\ \ K_0(\Var_K)$ of Theorem~\ref{thm:poly1} and $[\Ratb_{d,n}]=[\Poly^{d(n+1),1}_{n+1}]$ of Corollary~\ref{cor:rat=poly} hold when ${\rm char}(K)>0$?  We will see below that in each case there is a bijective morphism between the corresponding varieties; when ${\rm char}(K)=0$ this implies the desired equalities in $K_0(\Var_K)$.
\end{question}

\begin{remarks}\mbox{}
\begin{enumerate}
\item Note that the requirement that $n\geq 2$ in Question~\ref{question:iso} is necessary: it is {\em not} the case that $\Ratb_{d,1}$ is homotopy equivalent to $\Poly_{2}^{2d,1}$, since $\pi_1(\Ratb_{d,1}(\C))\iso \Z$ but $\Poly_{2}^{2d,1}(\C)\iso B_{2d}$, the braid group on $2d$ strands.

\item After seeing an earlier draft of this paper, Curt McMullen gave the following argument to show that Question~\ref{question:iso} has a positive answer in the first nontrivial case, $d=1$ and $n=2$, as follows.  The space $\Poly^{3,1}_3$ is the complement in $\A^3$ of a twisted cubic, since $(x+a)^3 = x^3 + 3ax^2 + 3a^2x + a^3$.   On the other hand, the space $\Ratb_{1,2}$ is isomorphic to $\A^3- \A^1$.   Over $\Z[\frac{1}{3}]$, it is an exercise to write down an explicit isomorphism (involving division by $3$) between these varieties.  In characteristic $3$, the twisted cubic above becomes $\{(0,0,a^3)\}$, and its complement is isomorphic to $\A^3- \A^1$ via the Frobenius $x\mapsto x^3$.
\end{enumerate}
\end{remarks}

\bigskip
\noindent
{\bf{The moduli space of $m$-pointed rational curves in $\Pb^n$}.}
Fix $d,n\geq 1$ and $m\ge 0$.  Let $\Mc_{0,m}(\Pb^n,d)$ be the {\em moduli space of $m$-pointed, degree $d$ rational curves in $\Pb^n$};  see \S\ref{section:M0m} below for a precise definition.  The spaces $\Mc_{0,m}(\Pb^n,d)$ are schemes defined over $\Z$.  We state our results for the associated subvariety $\Mc_{0,m}^\ast (\Pb^n,d)$ of curves passing through a fixed base point in $\Pb^n$.

\begin{theorem}[{\bf Arithmetic of \boldmath$\Mc^\ast_{0,m}(\Pb^n,d)$}]
\label{theorem:M0m}
    Let $m\ge 3$ and let $d,n\ge 1$. Then

    \begin{enumerate}
        \item {\bf Motive/point count: }
         When ${\rm char}(K)=0$ then
            \begin{equation*}
                [\Mc^\ast_{0,m}(\Pb^n,d)]=\left(\prod_{i=2}^{m-2}(\Lb-i)\right)\left(\Lb^{d(n+1)}-\Lb^{(d-n-1)(n+1)+1}\right).
            \end{equation*}
            in $K_0(\Var_K)$. Further, for any prime power $q$:
            \[
                |\Mc^\ast_{0,m}(\Pb^n,d)(\Fb_q)|=\left(\prod_{i=2}^{m-2}(q-i)\right)\left(q^{d(n+1)}-q^{(d-n-1)(n+1)+1}\right)
            \]
        \item {\bf Comparison Theorem: } We have isomorphisms of graded vector spaces
            \begin{equation*}
                H^\ast_{\et,c}(\Mc_{0,m}^\ast(\Pb^n;d)_{/\Fqbar};\Qb_\ell)\otimes_{\Qb_\ell}\Cb\cong H^\ast_c(\Mc_{0,m}^\ast(\Pb^n;d)(\Cb);\Cb).
            \end{equation*}
        \item {\bf Weights: } Let
            \begin{equation*}
                \nu(a):=\sum_\sigma\prod_{j=1}^{2(m-3)-a}(\sigma(j)+2),
            \end{equation*}
            where the sum is over order preserving injections
            \begin{equation*}
                \sigma\colon\{1,\ldots,2(m-3)-a\}\into\{0,\ldots,m-4\}.
            \end{equation*}
            Then, as a Galois representation, $H^i_{\et,c}(\Mc^\ast_{0,m}(\Pb^n,d)_{/\Fqbar};\Qb_\ell)$ consists precisely of
            \begin{enumerate}
                \item a direct summand
                    \begin{equation*}
                        \Qb_\ell(m-3+d(n+1)-i)^{\oplus\nu(2(m-3+d(n+1))-i))}
                    \end{equation*}
                    if $m-3+2d(n+1)\le i\le 2(m-3+d(n+1))$, and
                \item a direct summand
                    \begin{equation*}
                        \Qb_\ell(m-1+d(n+1)-i)^{\oplus \nu(2(m-2+d(n+1))-(i+n))}
                    \end{equation*}
                    if $m+(2d-1)(n+1)\le i\le 2(m-3+d(n+1))-n+2$,
            \end{enumerate}
            and nothing else.
    \end{enumerate}
\end{theorem}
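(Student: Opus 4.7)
The plan is to reduce the theorem to Theorem~\ref{thm:poly1} applied to $\Poly^{d,n+1}_{n+1}$ by realizing $\Mc^\ast_{0,m}(\Pb^n,d)$ as (motivically, and in $\ell$-adic cohomology) a product built from the simpler pieces $\Mc_{0,m}$ and $\Poly^{d,n+1}_{n+1}$.  Concretely, the central structural claim is the identity
\[
    [\Mc^\ast_{0,m}(\Pb^n,d)] = [\Mc_{0,m}] \cdot [\Poly^{d,n+1}_{n+1}]
\]
in $K_0(\Var_K)$, together with an analogous K\"unneth decomposition in compactly supported $\ell$-adic cohomology.  Geometrically, I would use the $\PGL_2$-freedom on the source $\Pb^1$ to fix three of the $m \ge 3$ marked points at $0, 1, \infty$; the remaining data is then (i) an ordered configuration of $m-3$ distinct marked points in $\A^1 \setminus \{0,1\}$, which recovers $\Mc_{0,m} \cong \PConf_{m-3}(\A^1 \setminus \{0,1\})$, and (ii) an $(n+1)$-tuple of degree-$d$ polynomials describing a map $f : \Pb^1 \to \Pb^n$ with $f(\infty) = *$, where the compactification defined in \S\ref{section:M0m} identifies this ``maps'' factor with $\Poly^{d,n+1}_{n+1}$.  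The relaxation from the true-morphism space $\Poly^{d,n+1}_1$ to $\Poly^{d,n+1}_{n+1}$ (permitting common roots of multiplicity up to $n$) is exactly what parametrizes the bubble-tree boundary strata of the moduli compactification.

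\textbf{Executing the three parts.}  Given the decomposition, part (1) follows from Theorem~\ref{thm:poly1}(1) applied with $(d,m,n) \mapsto (d, n+1, n+1)$, yielding $[\Poly^{d,n+1}_{n+1}] = \Lb^{d(n+1)} - \Lb^{(d-n-1)(n+1)+1}$, combined with the classical formula $[\Mc_{0,m}] = \prod_{j=2}^{m-2}(\Lb - j)$ obtained from the configuration space description.  For part (2), Theorem~\ref{thm:poly1}(3) supplies the comparison for the polynomial factor, while the analogous comparison for $\Mc_{0,m}$ is standard since this space is the complement of a linear subspace arrangement in $\A^{m-3}$ (Bj\"orner--Ekedahl \cite{BE}); the K\"unneth formula then gives the comparison for the product.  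For part (3), the Galois representation on $H^\ast_{\et,c}(\Poly^{d,n+1}_{n+1})$ splits into two Tate summands at degrees $2d(n+1)$ and $2(d-n-1)(n+1)+3$ by Theorem~\ref{thm:poly1}(4), while $H^\ast_{\et,c}(\Mc_{0,m})$ is pure mixed Tate with $\dim H^a_{\et,c}(\Mc_{0,m}) = \nu(a) = e_{2(m-3)-a}(\{2,\ldots,m-2\})$ (by Poincar\'e duality applied to the standard Poincar\'e polynomial $\prod_{j=2}^{m-2}(1+jt)$ of the ordered configuration space) at Tate twist $m-3-a$; tensoring each of the two Poly summands with $H^\ast_{\et,c}(\Mc_{0,m})$ via K\"unneth produces the two summands (a) and (b) of the theorem, with the twists $m-3+d(n+1)-i$ and $m-1+d(n+1)-i$ arising from the two possible Poly Tate twists and the $\nu(\cdot)$ multiplicities tracking the configuration-space contributions.

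\textbf{Main obstacle.}  The principal technical challenge is making the geometric decomposition in Step~1 precise, namely identifying the ``maps'' factor of the compactified moduli with $\Poly^{d,n+1}_{n+1}$.  The delicate point is that the boundary strata of the compactification -- where a bubble tree of total degree $k \le n$ is attached at a source point $z_0$ -- must correspond bijectively to the locus where the $(n+1)$-tuple of polynomials has a common root of multiplicity exactly $k$ at $z_0$, and this matching must be compatible with the configuration of marked points and with the basepoint constraint $f(\infty) = *$.  Once this correspondence is established and the product decomposition is verified (on $K_0$-classes and in $\ell$-adic cohomology), the remainder of the argument is a direct combinatorial bookkeeping using Theorem~\ref{thm:poly1} and the pure Tate structure of $\Mc_{0,m}$.
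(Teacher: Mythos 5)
Your central structural claim is wrong, and it stems from a misreading of the paper's definition of $\Mc_{0,m}(\Pb^n,d)$. In \S\ref{section:M0m} this space is defined as $(\PConf_m(\Pb^1)\times \Rat_d(\Pb^1,\Pb^n))/\PGL_2$, where $\Rat_d(\Pb^1,\Pb^n)$ is the space of genuine degree-$d$ \emph{morphisms} -- there is no compactification and hence no bubble-tree boundary strata to parametrize. Accordingly, the ``maps'' factor in Proposition~\ref{prop:iso1} is $\Rat_d^*(\Pb^1,\Pb^n)\cong\Poly_1^{d,n+1}$ (tuples with \emph{no} common root), not $\Poly_{n+1}^{d,n+1}$ (tuples allowed a common root of multiplicity up to $n$). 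The paper's actual argument is much more direct than what you propose: since $\Aff_1$ acts simply transitively on ordered pairs of distinct points of $\Ab^1$, one can normalize the first two marked points (beyond $\infty$) to $0$ and $1$, yielding the product decomposition $\Mc^*_{0,m}(\Pb^n,d)\cong\PConf_{m-3}(\Ab^1-\{0,1\})\times\Rat_d^*(\Pb^1,\Pb^n)$. Your ``main obstacle'' paragraph -- matching boundary strata of a compactification to common-root loci -- addresses a problem that simply does not arise.

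A consequence worth flagging: applying Theorem~\ref{thm:poly1}(1) to the \emph{correct} maps factor $\Poly_1^{d,n+1}$ gives $[\Ratb_{d,n}]=\Lb^{d(n+1)}-\Lb^{(d-1)(n+1)+1}$, not $\Lb^{d(n+1)}-\Lb^{(d-n-1)(n+1)+1}$. Your incorrect identification with $\Poly^{d,n+1}_{n+1}$ happens to reproduce the exponent $(d-n-1)(n+1)+1$ appearing in the theorem statement, but that exponent appears to be a typo in the statement itself. You can verify this: for $m=3$, $d=1$, $n=1$, the space $\Mc^*_{0,3}(\Pb^1,1)$ has class $\Lb^2-\Lb$ (it is a fiber of the locally trivial map $\PGL_2\cong\Mc_{0,3}(\Pb^1,1)\to\Pb^1$), which matches $(d-1)(n+1)+1=1$ but not $(d-n-1)(n+1)+1=-1$. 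Getting the printed answer by a wrong route is not a proof. The rest of your outline -- the formula $[\Mc_{0,m}]=\prod_{i=2}^{m-2}(\Lb-i)$ via Proposition~\ref{prop:nir}, the comparison via Bj\"orner--Ekedahl, and K\"unneth plus the pure Tate structure of $\PConf_{m-3}(\Ab^1-\{0,1\})$ for the weights -- does match the paper's approach, but it rests on the incorrect product decomposition at its base.
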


\paragraph{Acknowledgements} We thank Sasha Beilinson, Weiyan Chen, Jordan Ellenberg, Matt Emerton, Ezra Getzler, Sean Howe, Peter May, June Park, Joel Specter, Shmuel Weinberger, and Melanie Wood for helpful conversations. We thank Nir Gadish for explaining to us the argument in the first part of the proof of Theorem \ref{theorem:M0m}.   We would like to acknowledge that a post on Jordan Ellenberg's blog, Quomodocumque, based on an idea of Michael Zieve, helped to inspire one of the arguments in this paper.  We thank Ben O'Connor for corrections on an earlier draft of this paper. We are grateful to Curt McMullen and Joe Silverman for numerous comments, corrections and suggestions on an earlier draft of this paper. Finally, we are grateful to Hunter Spink and Dennis Tseng for pointing out a mistake in an earlier version of this paper (cf.\ Remark~\ref{remark:ST} below).

\section{Preliminaries}
\label{section:background}
In this section we review some background material that we will use later in the paper.

\paragraph{The Grothendieck ring of varieties.}
The Grothendieck ring $K_0(\Var_K)$ of varieties over the field $K$ gives a useful framework for computing invariants like point counts. Recall that, as a group, $K_0(\Var_K)$ is generated by isomorphism classes of $K$-varieties $[X]$, modulo the relations $[X]=[Z]+[X-Z]$ for $Z\subset X$ a closed subvariety. Multiplication is induced by the Cartesian product of varieties, i.e. $[X][Y]=[X\times_K Y]$. It is elementary to prove that, for $K=\F_q$, the assignment
\begin{equation*}
    [X]\mapsto |X(\F_q)|
\end{equation*}
extends to a homomorphism $K_0(\Var_K)\to \Zb$. Thus, computations in $K_0(\Var_K)$ carry implications for point counts. Following convention, we denote by $\Lb$ the class $[\Ab^1]\in K_0(\Var_k)$ and refer to it as the \emph{Lefschetz motive}.

\paragraph{The Grothendieck--Lefschetz Trace Formula}
Let $Z$ be a scheme defined over $\Z$.  We can reduce modulo $q$ for any prime power $q$ to obtain an algebraic variety defined over $\F_q$.  We can then ask for the number $|Z(\F_q)|$ of $\F_q$-points of $Z$.

How can we compute $|Z(\F_q)|$? One approach begins with Hasse's fundamental observation that the set $Z(\F_q)$ is the set of fixed points of the {\em geometric Frobenius} morphism $\Frob_q\colon Z\to Z$. In topology, the classical Lefschetz Fixed Point Theorem computes (in many cases) the number of fixed points of a continuous self-map $f:Z\to Z$ of a triangulable topological space in terms of the traces of the induced maps $f^*:H^i(Z;\Q)\to H^i(Z;\Q)$ on the singular cohomology of $Z$.  While singular cohomology is unsuitable for studying varieties over $\Fqbar$, the {\em \'{e}tale cohomology}, developed by Grothendieck and his school, is designed precisely for this purpose.

To set this up, fix a prime power $q$, a prime $\ell$ not dividing $q$, and let $\Q_\ell$ denote the $\ell$-adic rationals.  Attached to any variety $Z$ defined over $\F_q$ are its \'{e}tale cohomology groups $H^i_{\et}(X_{/\Fqbar};\Q_\ell), i\geq 0$ (cf.\ e.g.\ \cite{Mi} or \cite{De}). The key result we will use from this theory is the {\em Grothendieck--Lefschetz Trace Formula} \cite[Theorem 25.1]{Mi}.  For smooth projective varieties $Z$ defined over $\F_q$, this formula gives:
\begin{equation}
\label{eq:GL1}
\begin{array}{ll}
\left |Z(\F_q)\right|&=\#\Fix(\Frob_q:Z(\Fqbar)\to Z(\Fqbar)) \\
&\\
&= \sum_{i\geq 0}(-1)^i\tr\big(\Frob_q:H^i_{\et}(Z_{/\Fqbar};\Q_\ell)\to H^i_{\et}(Z_{/\Fqbar};\Q_\ell)\big)
\end{array}
\end{equation}
However the varieties we consider in this paper are not projective. To remedy this, we first note that Formula \eqref{eq:GL1} holds for any $Z$ of finite type if we replace $H^i_{\et}(Z_{/\Fqbar};\Q_\ell)$ by compactly supported \'{e}tale cohomology $H^i_{\et,c}(Z_{/\Fqbar};\Q_\ell)$ (cf.  \cite[6.1.1.1]{De}).  When $Z$ is smooth, we can then apply Poincar\'{e} duality for \'{e}tale cohomology \cite[Theorem 24.1]{Mi} to obtain
\[H^i_{\et,c}(Z_{/\Fqbar};\Q_\ell)\iso H^{2\dim(Z)-i}_{\et}(Z_{/\Fqbar};\Q_\ell(-\dim(Z)))^*\]
where $^*$ denotes the dual space.  Plugging this in to \eqref{eq:GL1} gives, for any smooth but not necessarily projective variety:
\begin{equation}
\label{eq:GL2}
\left|Z(\F_q)\right|= q^{\dim(Z)}\sum_{i\geq 0}(-1)^i
\tr\big(\Frob_q:H^i_{\et}(Z_{/\Fqbar};\Q_\ell)^{*}\to H^i_{\et}(Z_{/\Fqbar};\Q_\ell)^{*}\big)
\end{equation}

\section{Proof of Theorem~\ref{thm:poly1}}
\label{section:polyproof}

We remark that when $n>d$, the condition of having no common root of multiplicity $n$ is clearly empty.  When $n=d$, the condition that the degree $d$ polynomials $f_i$ have a common root of multiplicity $n=d$ is simply that there exists $z_0\in K$ so that $f_i(z)=(z-z_0)^d$ for each $i$; the space of such polynomials is thus isomorphic to $\A^1$.  We thus have
\begin{equation}
    \label{eq:spfiltration0}
        \Poly_n^{d,m}\cong\left\{\
            \begin{array}{ll}
                \Ab^{dm}&\text{if $n>d$}\\
                \Ab^{dm}-\Ab^1&\text{if $n=d$}
            \end{array}\right.
\end{equation}

Thus the most interesting case is when $d>n$.

\begin{proof}[Proof of Theorem~\ref{thm:poly1}]
We prove the theorem by filtering the space of $m$-tuples of monic, degree $d$ polynomials
by closed subvarieties, and analyzing this filtration via topology and algebraic geometry, in a series of steps.

\bigskip
\noindent
{\bf Step 1 (Building a filtration): }  Recording coefficients gives an isomorphism from the space of $m$-tuples $(f_1,\ldots ,f_m)$ of monic, degree $d$ polynomials to the affine space $\Ab^{dm}$.  Parameterizing
this space by the ``roots'' of the polynomials in the ordered $m$-tuple, we see that the ordered space of ``roots'' is isomorphic to $\overbrace{\A^d\times\cdots \times \A^d}^m$,
and that $S_d\times \cdots \times S_d  \ \text{($m$ times)}$ acts on this variety by permuting the roots.  The quotient $\A^d\times\cdots \times \A^d/(S_d\times\cdots\times S_d)$ is thus a variety, and in fact is isomorphic to $\A^{dm}$, by Newton's theorem on symmetric polynomials.\footnote{We remind the reader that the identification of the space of $m$-tuples of monic polynomials with the quotient $\Ab^{dm}/(S_d)^m$ is an isomorphism of schemes. Over an algebraically closed field $\bar{K}$, the quotient map $\Ab^{dm}\to\Ab^{dm}/(S_d)^m$ will be surjective on $\bar{K}$-points (since the roots of any polynomial over $\bar{K}$ are also in $\bar{K}$), but this will not be the case over a general field.}

For any $k\geq 0$, denote by $R_{n,k}^{d,m}$ the space of $m$-tuples $(f_1,\ldots ,f_m)$ of monic, degree $d$ polynomials for which there exists a monic $h\in K[z]$ with $\deg(h)\geq k$ and monic polynomials $g_i\in K[z]$ so that
    \[f_i(z)=g_i(z)h(z)^n\]
    for each $1\leq i\leq m$.  So for example $R_{n,0}^{d,m}=\A^{dm}$, and the $R^{d,m}_{n,k}$ give a descending filtration
    \begin{equation}
    \label{eq:filtration}
        \A^{dm}=R_{n,0}^{d,m}\supset R_{n,1}^{d,m}\supset\cdots \supset \emptyset.
    \end{equation}

We claim that each $R_{n,k}^{d,m}$ is a closed subvariety of $\A^{dm}$.
To see this, let $\Sc_{d,m,n,k}$ be the collection of all
$m$-tuples
$\sigma=(\sigma_1,\ldots ,\sigma_m)$ of injections
\[\sigma_i:\{1,\ldots ,n\}\times \{1,\ldots ,k\} \to \{1,\ldots d\}\]
with the property that each $\sigma_i$ is order-preserving on each $\{1,\ldots ,n\}\times \{j\}$
and such that $\sigma_i(1,a)<\sigma_i(1,b)$ for each $a<b$.  For each $\sigma\in \Sc_{d,m,n,k}$,
let $L_\sigma$ be the linear subspace of $\A^{dm}$ defined by the equations

\[\{x_{\sigma_i(a,b)}=x_{\sigma_i(a',b)}: 1\leq a,a'\leq n,~ 1\le i\le m\} \cup \{x_{\sigma_i(a,b)}=x_{\sigma_j(a,b)}: 1\leq i,j\leq m\}.\]
Note that each $L_\sigma$ is an affine subspace of $\A^{dm}$ of dimension $k+m(d-nk)$.  The action of $S_d^m$ on $\A^d\times\cdots\times\A^d$ preserves the union of linear subspaces $\bigcup_{\sigma\in \Sc_{d,m,n,k}} L_\sigma$, and $R_{n,k}^{d,m}$ is the quotient of the union of linear subspaces by this action.   Since the quotient of an affine variety by a finite group action is an affine variety, and since such quotient maps take closed invariant subvarieties to closed subvarieties, it follows that each $R_{n,k}^{d,m}$ is an closed subvariety of $\A^{dm}$.

\medskip
\noindent
{\bf Step 2 (Extracting common factors): } Let $k\geq 0$.  Define a morphism
\[\overline{\Psi}:\A^{m(d-nk)}\times\A^k\to \A^{md}\]
by
\[\overline{\Psi}(f_1,\ldots,f_m,g):=(f_1g^n,\ldots,f_mg^n).\]
The restriction of $\overline{\Psi}$ to $\Poly_n^{d-kn,m}\times \A^k$ gives a morphism
\begin{equation}
\label{eq:spfiltration2}
\Psi: \Poly_n^{d-kn,m}\times \A^k\to  R_{n,k}^{d,m}-R_{n,k+1}^{d,m}
    \end{equation}
where the target is the space of $m$-tuples of degree $d$ polynomials with a common $n$-fold factor of degree equal to $k$, with no other common $n$-fold factors.  We think of the map $\Psi^{-1}$ as the (non-algebraic) map that extracts a common $n$-fold factor from a tuple of polynomials.   We claim that:

\begin{enumerate}
\item For any field $k$ the morphism $\Psi$ is bijective.
\item For $k=\Cb$, the map $\Psi$ is a homeomorphism in the classical topology.
\end{enumerate}

These facts will allow us to analyze $\Poly^{d,m}_n$ recursively.  Note that the case $k=0$ follows by definition:
    \begin{equation*}
       \Poly_n^{d,m}:=R_{n,0}^{d,m}-R_{n,1}^{d,m}.
    \end{equation*}

To see (1): It is clear from the definitions that $\Psi$ is surjective.  The map $\Psi$ is injective because there is a unique $n$-fold degree $k$ factor in each $f_ig^n$, so if $f_ig^n=u_iv^n$ then this implies $g=v$ and so $f_i=u_i$.

To see (2): First note that the spaces of polynomials
in the range and domain of $\overline{\Psi}$ have Galois covers given by the corresponding spaces
of (all possible orderings of) roots, with deck group the appropriate symmetric group.  The map $\overline{\Psi}$
lifts to a map between these spaces of roots :

\[\Phi:A^{m(d-nk)}\times\A^k\to \A^{md}\]
\noindent
given by

\[\Phi((\vec{r}_1,\ldots,\vec{r}_m),\vec{s})):=((\vec{r}_1,(\vec{s})^n),\ldots , (\vec{r}_m,(\vec{s})^n))\]
\noindent
where $\vec{r}_i$ is the vector of $d$ roots of $f_i$;  the vector of roots of $g$ is denoted $\vec{s}$; and where
$(\vec{s})^n$ denotes the vector $(\vec{s},\ldots ,\vec{s})$, where $\vec{s}$ is repeated $n$ times.  It follows that the map $\Phi$ is closed, and hence the map $\overline{\Psi}$ is closed, and hence the map $\Psi$ is closed.  Since $\Psi$ is bijective, it follows that $\Psi$ is a homeomorphism.

\begin{remark}
\label{remark:ST}
One might hope that the bijective morphism $\Psi$ is in fact an isomorphism, and indeed this was claimed in an earlier version of this paper.  However, as pointed out to us by H. Spink and D. Tseng, this is not true.
\end{remark}

\medskip
\noindent
{\bf Step 3 (Computing \boldmath$[\Poly_n^{d,m}]\in K_0(\Var_K)$ and $|\Poly_n^{d,m}(\Fb_q)|$): } We have shown that there is a descending filtration of closed subvarieties:
    \begin{equation*}
        \A^{dm}=R_{n,0}^{d,m}\supset R_{n,1}^{d,m}\supset\cdots \supset \emptyset.
    \end{equation*}
    As a result, $\A^{dm}$ admits a disjoint decomposition by locally closed subvarieties
    \begin{equation*}
        \A^{dm}=\coprod_{k\geq 0} (R_{n,k}^{d,m}- R_{n,k+1}^{d,m}).
    \end{equation*}
    Taking classes in the Grothendieck ring $K_0(\Var_K)$ gives
    \begin{equation}\label{eq:spfiltration1}
        \Lb^{dm}=\sum_{k\geq 0} ([R_{n,k}^{d,m}]-[R_{n,k+1}^{d,m}]),
    \end{equation}
    where we write $\Lb$ for the class $[\Ab^1]\in K_0(\Var_K)$.

We now claim that, when ${\rm char}(K)=0$ then
\begin{equation}
\label{eq:spfiltration2:GR}
[\Poly_n^{d-kn,m}]\cdot \Lb^k= [R_{n,k}^{d,m}] - [R_{n,k+1}^{d,m}]
\end{equation}
To see this, first note that we proved in Step 2 that the map $\Psi$ in \eqref{eq:spfiltration2} is a bijective morphism on $K$-points for all fields $K$.  It is known (see, e.g., Remark 4.1 of \cite{Go}) that if ${\rm char}(K)=0$ then a bijective morphism of $K$-varieties induces an equality $[X]=[Y]$ in the Grothendieck ring of $K$-varieties.

Plugging in the expression from Equation \eqref{eq:spfiltration2:GR}  into Equation \eqref{eq:spfiltration1} then gives the following  recursive formula in the ring $K_0(\Var_K)$ when ${\rm char}(K)=0$:
    \begin{equation}\label{eq:spfiltration3}
        [\Poly_n^{d,m}]=\Lb^{dm} - \sum_{k\geq 1}[\Poly_n^{d-kn,m}]\cdot \Lb^k
    \end{equation}
    It is left to prove the claimed result, namely that for $d>n\geq 1$, this recursion is solved by
    $[\Poly_n^{d,m}]=\Lb^{dm}-\Lb^{(d-n)m+1}$.  We proceed by induction on $d$.   This gives :
    \begin{equation*}
        \begin{array}{ll}
            [\Poly_n^{d,m}]&=\Lb^{dm} - \sum_{k\geq 1}[\Poly_n^{d-kn,m}]\cdot \Lb^k\\
            &\\
            &=\Lb^{dm}-(\sum_{k\geq 1}^{\lfloor \frac{d}{n}\rfloor-1}(\Lb^{(d-nk)m}-\Lb^{(d-n(k+1))m+1})\cdot \Lb^k+ \Lb^{(d-n\lfloor\frac{d}{n}\rfloor)m}\cdot\Lb^{\lfloor \frac{d}{n}\rfloor})\\
            &\\
            &=\Lb^{dm}-(\sum_{k\geq 1}^{\lfloor \frac{d}{n}-1\rfloor}(\Lb^{(d-nk)m+k}-\Lb^{(d-n(k+1))m+k+1})+ \Lb^{(d-n\lfloor\frac{d}{n}\rfloor)m+\lfloor \frac{d}{n}\rfloor})\\
            &\\
            &=\Lb^{dm}-(\Lb^{(d-n)m+1}-\Lb^{(d-n\lfloor\frac{d}{n}\rfloor)m+\lfloor \frac{d}{n}\rfloor}+\Lb^{(d-n\lfloor\frac{d}{n}\rfloor)m+\lfloor \frac{d}{n}\rfloor})\\
            &\\
            &=\Lb^{dm}-\Lb^{(d-n)m+1}.
        \end{array}
    \end{equation*}

When $K=\Fb_q$, by replacing each instance of $[X]\in K_0(\Var_K)$ by $|X(\F_q)|$ in the argument above, we conclude that $\Poly_n^{d,m}(\F_q)=q^{dm}-q^{(d-n)m+1}$.  This proves Statement (1) of the theorem.

\medskip
\noindent
{\bf Step 4 (The comparison theorem): }
 We now establish Statement (3). Artin's comparison theorem (see \cite{Art}) shows that the $\ell$-adic cohomology of the variety $\Poly_{n/\Cb}^{d,m}$ agrees with the singular cohomology of $\Poly_n^{d,m}(\C)$.  We now claim that this agreement persists when we replace $\Cb$ by $\Fqbar$. To see this, we will use transfer plus a result of Bj\"orner--Ekedahl on complements of linear subspace arrangements.

    The relevant subspace arrangement $V_{\mathcal{A}}\subset\Ab^{dm}$ consists of all subspaces of the form
    \begin{equation*}
        L_{\bar{\sigma}}=\{z_{1,\sigma_1(i)}=\cdots=z_{m,\sigma_m(i)}~|~1\le i\le n\}
    \end{equation*}
    for some collection of injections
    \begin{equation*}
        \bar{\sigma}=\{\{1,\ldots,n\}\to^{\sigma_j}\{1,\ldots,d\}\}_{j=1}^m.
    \end{equation*}
    The action of $(S_d)^{\times m}$ on $\Ab^{dm}$ by permuting the coordinates preserves the arrangement $V_{\mathcal{A}}$, and, over an algebraically closed field $\overline{K}$, the variety $\Poly_n^{d,m}$ is the quotient of this action on the complement $\Ab^{dm}-V_{\mathcal{A}}$. By transfer, we see that
    \begin{equation*}
        H^i_{et,c}(\Poly_{n/\bar{K}}^{d,m};\Qb_\ell)\cong H^i_{et,c}(\Ab^{dm}_{\bar{K}}-V_{\mathcal{A}/\bar{K}};\Qb_\ell)^{(S_d)^{\times m}}.
    \end{equation*}
    Further, if we denote by $L_{\mathcal{A}}$ the intersection lattice of the subspaces in $V_{\mathcal{A}}$, we see that the natural identification
    \begin{equation*}
        L_{\mathcal{A}_{/\Cb}}\to L_{\mathcal{A}_{/\Fqbar}}
    \end{equation*}
    defines an $(S_d)^{\times m}$-equivariant isomorphism of lattices which also respects the natural dimension functions on each. By \cite[Theorem 4.9(i)]{BE}, the $\ell$-adic cohomology of the complement of a subspace arrangement is functorially determined by the intersection lattice together with its dimension function. In particular, the isomorphism above defines an $(S_d)^{\times m}$-equivariant isomorphism
    \begin{equation*}
        H^i_{et,c}(\Ab^{dm}_{\Cb}-V_{\mathcal{A}/\Cb};\Qb_\ell)\cong H^i_{et,c}(\Ab^{dm}_{\Fqbar}-V_{\mathcal{A}/\Fqbar};\Qb_\ell).
    \end{equation*}
The restriction of this isomorphism to the subspaces of invariants gives the claimed
comparison isomorphism, proving Statement (3).

\medskip
\noindent
{\bf Step 5 (Betti numbers): }To prove Statement (2) we proceed by induction on $d$. For the base case, the statement of the theorem follows immediately from the isomorphism
    \begin{equation*}
        \Poly_n^{n,m}\cong\Ab^{nm}-\Ab^1.
    \end{equation*}
    Now suppose that we have shown the result for $j<d$. We will prove the induction step by first computing the Betti numbers at step $d$.  We then use the comparison isomorphism to obtain the ranks of the $\ell$-adic cohomology groups, and we deduce the weights from the point count and Grothendieck--Lefschetz.

\smallskip
\noindent
{\bf Step 5a (Inducting on the degree): }
    Our argument is an extension of the arguments in Segal \cite{Se}.  As in \textit{loc. cit.} we construct, for all $d>n$, a continuous open embedding
    \begin{align}\label{stabS}
        \Poly_n^{d-1,m}(\Cb)\times\Cb^m\to\Poly_n^{d,m}(\Cb)
    \end{align}
    by ``bringing zeroes in from infinity''. We will show by induction that this induces an isomorphism on compactly supported rational cohomology. For the base case, we have a map of cofiber sequences
    \begin{equation*}
        \xymatrix{
            R_{n,1}^{n+1,m}(\Cb)^+ \ar[r] \ar[d] & R_{n,0}^{n+1,m}(\Cb)^+ \ar[r] \ar[d] & (\Poly_n^{n+1,m}(\Cb))^+ \ar[d]\\
            (R_{n,1}^{n,m}(\Cb)\times\Cb^m)^+ \ar[r] & (R_{n,0}^{n,m}(\Cb)\times\Cb^m)^+ \ar[r] & (\Poly_n^{n,m}(\Cb)\times\Cb^m)^+
        }
    \end{equation*}
    where $X^+$ denotes the 1-point compactification of $X$. This is isomorphic to
    \begin{equation*}
        \xymatrix{
            (\Cb^{m+1})^+ \ar[r] \ar[d] & (\Cb^{(n+1)m})^+ \ar[r] \ar[d] & (\Poly_n^{n+1,m}(\Cb))^+ \ar[d]\\
            (\Cb^1\times\Cb^m)^+ \ar[r] & (\Cb^{nm}\times\Cb^m)^+ \ar[r] & (\Poly_n^{n,m}(\Cb)\times\Cb^m)^+
        }
    \end{equation*}
    Because the first two vertical maps induce isomorphisms in compactly supported cohomology, the Five Lemma (applied to the map of long exact sequences in cohomology) shows that the right vertical map induces a cohomology isomorphism as well.

    \smallskip
\noindent
{\bf Step 5b (Computing  \boldmath$H^i_c(R_{n,k}^{d,m}(\Cb);\Cb)$): }Suppose that we have shown that \eqref{stabS} induces an isomorphism in compactly supported singular rational cohomology for $j<d$. We will deduce the singular cohomology of $\Poly_n^{d,m}(\Cb)$ from the following claim.
    \begin{claim}
        Let $d\ge n$ and $k\le \lfloor \frac{d}{n}\rfloor$. Then the compactly supported singular cohomology $R_{n,k}^{d,m}(\C)$ is given by
        \begin{equation*}
            H^i_c(R_{n,k}^{d,m}(\Cb);\Cb)\cong
                \left\{
                    \begin{array}{ll}
                        \Cb & i=2(d-kn)m+2k\\
                        0 & \text{else}
                    \end{array}\right.
        \end{equation*}
    \end{claim}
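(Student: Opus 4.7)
The plan is a downward induction on $k$ (from $k = \lfloor d/n\rfloor$ down to $k=0$) using the closed–open decomposition of Step 2, combined with a comparison between $R_{n,k}^{d,m}$ and $\Ab^{(d-kn)m+k}$ via the factorization morphism
\[
    \phi\colon \Ab^{(d-kn)m+k} = \Ab^k\times\Ab^{(d-kn)m} \to R_{n,k}^{d,m},\qquad (h, g_1,\dots,g_m) \mapsto (g_1 h^n,\dots,g_m h^n).
\]
A standard root-bound estimate (convergence of $g_i h^n$ controls the roots, hence coefficients, of $g_i$ and $h$) shows $\phi$ is proper in the analytic topology, hence a finite morphism over $\Cb$.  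For the base case $k = \lfloor d/n\rfloor$, degree considerations force the factorization to be unique, so $\phi$ is an isomorphism and the claim is immediate.

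For the inductive step, set $X := R_{n,k}^{d,m}$, $Z := R_{n,k+1}^{d,m}$, $U := X - Z \cong \Poly_n^{d-kn,m}\times\Ab^k$, and analogously $\tilde X := \Ab^{(d-kn)m+k}$, $\tilde Z := \phi^{-1}(Z) = \Ab^k\times R_{n,1}^{d-kn,m}$, $\tilde U := \Ab^k\times \Poly_n^{d-kn,m}$.  By the outer induction on $d$ from Step 5a, $H^\ast_c(U) = H^\ast_c(\tilde U)$ is of rank one in each of the degrees $A := 2(d-kn)m+2k$ and $B := 2(d-(k+1)n)m+2k+3$; by inner inductive hypothesis $H^\ast_c(Z) = H^\ast_c(\tilde Z)$ is of rank one in degree $C := 2(d-(k+1)n)m+2(k+1)$, with $B = C+1$; and $H^\ast_c(\tilde X)$ is $\Cb$ only in degree $A$.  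The LES for $\tilde Z\subset \tilde X$ with complement $\tilde U$ then forces the connecting homomorphism $\delta_1\colon H^C_c(\tilde Z)\to H^{C+1}_c(\tilde U)$ to be an isomorphism of one-dimensional spaces, since $H^C_c(\tilde X) = H^{C+1}_c(\tilde X) = 0$.

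Naturality of the LES under the proper map $\phi$ gives the commutative square
\[
\xymatrix{
    H^C_c(Z) \ar[r]^{\delta} \ar[d]_{\phi|_{\tilde Z}^\ast} & H^{C+1}_c(U) \ar[d]^{\phi|_{\tilde U}^\ast} \\
    H^C_c(\tilde Z) \ar[r]^{\delta_1} & H^{C+1}_c(\tilde U)
}
\]
in which $\phi|_{\tilde U}^\ast$ is an isomorphism (as $\phi|_{\tilde U}\colon \tilde U\to U$ is itself an isomorphism by uniqueness of the greatest $n$-fold common factor over $U$, together with Zariski's Main Theorem applied to the smooth target $U$), while $\phi|_{\tilde Z}^\ast$ is injective via the transfer identity $\phi|_{\tilde Z,\ast}\circ \phi|_{\tilde Z}^\ast = (k+1)\cdot\mathrm{id}$ for the finite surjective morphism $\phi|_{\tilde Z}\colon \tilde Z\to Z$ of degree $k+1$; between one-dimensional spaces it is therefore an isomorphism.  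Hence $\delta$ is an isomorphism, and the target LES yields $H^C_c(X) = \ker(\delta) = 0$, $H^B_c(X) = \mathrm{coker}(\delta) = 0$, and $H^A_c(X) = \Cb$, completing the induction.

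The main obstacle is showing that $\delta$ is nonzero; this is exactly what the comparison with $\tilde X$ via $\phi$ resolves, reducing the problem to the triviality that $\delta_1$ is nonzero (forced because $\tilde X$ is affine space).  The degenerate case $(n,m)=(1,1)$ must be handled separately: there $\Poly_1^{d',1}$ is empty for $d'\geq 1$, so every open stratum collapses and $R_{1,k}^{d,1} = \Ab^d$ satisfies the claim trivially.
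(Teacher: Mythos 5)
Your proof is essentially correct, and it takes a genuinely different route from the paper's.

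The paper's Step 5b also proceeds by downward induction on $k$ with the same base case, but the engine is a Segal-style stabilization map $R_{n,k}^{d-1,m}(\Cb)\times\Cb^m\to R_{n,k}^{d,m}(\Cb)$ (``bringing zeros in from infinity''), fed through the Five Lemma; this requires an extra sub-induction on $d$ interleaved with the one on $k$, and, crucially, a separate and rather delicate treatment of the case $n\mid d$ (the cases $k=a$ and $k=a-1$ with $d=an$), where the authors analyze an explicit branched-cover map of cofiber sequences to pin down the connecting map. Your approach instead exploits the algebraic factorization morphism $\phi\colon\Ab^{(d-kn)m+k}\to R_{n,k}^{d,m}$ directly, reading the nonvanishing of the connecting homomorphism $\delta$ off of the trivial long exact sequence for affine space $\tilde X=\Ab^{(d-kn)m+k}$. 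This is uniform in $d$ and sidesteps the $n\mid d$ dichotomy entirely. Both proofs need the ``outer'' induction on $d$: the paper uses it through Step~5a (stabilization for $\Poly$), and you use it (implicitly, and you should flag this more clearly) when computing $H^\ast_c(\tilde U)$ and $H^\ast_c(\tilde Z)$, since these involve $\Poly_n^{d-kn,m}$ and $R_{n,1}^{d-kn,m}$ for the strictly smaller parameter $d-kn<d$ (valid for $k\geq 1$; the residual case $k=0$ is trivial since $R_{n,0}^{d,m}=\Ab^{dm}$, a point you should state rather than leave implicit).

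Two small repairs. First, verifying $\phi^{-1}(Z)=\Ab^k\times R_{n,1}^{d-kn,m}$ set-theoretically takes a short multiplicity count (if $g_ih^n=g_i'(h')^n$ with $\deg h'\geq k+1$, comparing root multiplicities against $\deg h=k$ forces a common degree-$\geq 1$ $n$-fold factor of the $g_i$); it is true, but not quite automatic, and deserves a line. Second, the appeal to a transfer identity for the finite surjective but not obviously flat map $\phi|_{\tilde Z}\colon\tilde Z\to Z$ between possibly singular varieties is the one place I'd be uneasy as written. It can be replaced by the cleaner observation that $C=2\dim_\Cb Z=2\dim_\Cb\tilde Z$ is the \emph{top} degree of $H^\ast_c$ for both irreducible varieties; for a proper surjective map of irreducible complex varieties of the same dimension, the pullback on top-degree compactly supported cohomology (equivalently, pushforward of fundamental classes in Borel--Moore homology) is multiplication by the generic degree $k+1$, hence nonzero over $\Qb$. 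This gives the injectivity you want without invoking a transfer whose existence in this generality would need justification. With those adjustments your argument is a valid --- and arguably tidier --- alternative to the paper's.
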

    We prove this claim by downward induction on $k$. For the base case, observe that
    \begin{align*}
        R_{n,1}^{n,m}&\cong\Ab^1\\
        R_{n,0}^{n,m}&\cong\Ab^{nm}
    \end{align*}
    so the statement follows. Similarly to \eqref{stabS}, we also construct a continuous open embedding
    \begin{align}\label{stabF}
        R_{n,k}^{d-1,m}(\Cb)\times\Cb^m\to R_{n,k}^{d,m}(\Cb)
    \end{align}
    by ``bringing in zeroes from infinity''. For $d\le n+1$ and all $k$, we see that this induces an isomorphism on compactly supported cohomology.  Similarly, supposing that $n\nmid d$, we see that the map induces an isomorphism on compactly supported cohomology for $k=\lfloor\frac{d}{n}\rfloor$.

    Continuing to assume to $n\nmid d$, we now induct down on $k$. Suppose we have shown the claim for all $j<d$, and also assume that we have shown that the maps \eqref{stabF} induce isomorphisms in compactly supported rational cohomology for $j<d$ and all $k$. For the base case of the induction on $k$, let $a=\lfloor\frac{d}{n}\rfloor$. Then we have
    \begin{align*}
        R_{n,a}^{d,m}&\cong\Poly_n^{d-an,m}\times\Ab^a\\
        &\cong \Ab^{(d-an)m+a},
    \end{align*}
    so the statement follows. Similarly, we observed above that the map \eqref{stabF} induces an isomorphism on compactly supported cohomology for $j=d$ and $k=a$. Now suppose we have shown that \eqref{stabF} induces such an isomorphism for $j=d$ and $k+1>1$. Observe that the ``bringing in zeroes'' maps fit together to give a continuous map of cofiber sequences
    \begin{equation*}
        \xymatrix{
            R_{n,k+1}^{d,m}(\Cb)^+ \ar[r] \ar[d] & R_{n,k}^{d,m}(\Cb)^+ \ar[r] \ar[d] & (\Poly_n^{d-kn,m}(\Cb)\times\Cb^k)^+ \ar[d]\\
            (R_{n,k+1}^{d-1,m}(\Cb)\times\Cb^m)^+ \ar[r] & (R_{n,k}^{d-1,m}(\Cb)\times\Cb^m)^+ \ar[r] & (\Poly_n^{d-1-kn,m}(\Cb)\times\Cb^k\times\Cb^m)^+
        }
    \end{equation*}
    This gives rise to a map of long exact sequences
    \begin{equation*}
        \xymatrix{
            \cdots H^{i-2(k+m)}_c(\Poly_n^{d-1-kn,m}(\Cb);\Cb) \ar[r]\ar[d] & H^{i-2m}_c(R_{n,k}^{d-1,m}(\Cb);\Cb) \ar[r]\ar[d] & H^{i-2m}_c(R_{n,k+1}^{d-1,m}(\Cb);\Cb) \ar[d] \cdots\\
            \cdots H^{i-2k}_c(\Poly_n^{d-kn,m}(\Cb);\Cb) \ar[r] & H^i_c(R_{n,k}^{d,m}(\Cb);\Cb) \ar[r] & H^i_c(R_{n,k+1}^{d,m}(\Cb);\Cb) \cdots
        }
    \end{equation*}
    Our inductive hypotheses and the Five Lemma show that the claim holds for $R_{n,k}^{d,m}$ and that the map \eqref{stabF} is an equivalence for $k$. This concludes the induction step, and thus the claim, when $n\nmid d$.

    When $d=an$ for $a>1$, the induction proceeds as above, once we establish the cases $k=a$ and $k=a-1$. The claim about the cohomology of $R_{n,a}^{an,m}$ follows from the isomorphism
    \begin{equation*}
        R_{n,a}^{an,m}\cong \Ab^a.
    \end{equation*}
    For $k=a-1$, the identification
    \begin{equation*}
        R_{n,a-1}^{an,m}-R_{n,a}^{an,m}\cong \Poly_n^{n,m}\times\Ab^{a-1}\cong (\Ab^{nm}-\Ab^1)\times\Ab^{a-1}
    \end{equation*}
    gives rise to the long exact sequence in compactly supported cohomology
    \begin{equation*}
        \cdots\to H^{i-2(a-1)}_c(\Cb^{nm}-\Cb^1;\Cb)\to H^i_c(R_{n,a-1}^{an,m}(\Cb);\Cb)\to H^i_c(\Cb^a;\Cb)\to^\partial\cdots
    \end{equation*}
    This implies that
    \begin{equation*}
        H^i_c(R_{n,a-1}^{an,m}(\Cb);\Cb)\cong
            \left\{
                \begin{array}{ll}
                    0 & i<2a\\
                    0 & 2a+1<i<2nm+2(a-1)\\
                    \Cb & i=2nm+2(a-1)\\
                    0 & i>2nm+2(a-1)%had misstated this as else in the version we sent Duke.
                \end{array}\right.
    \end{equation*}
    For the remaining cases, we have a long exact sequence
    \begin{align*}
        0\to H^{2a}_c(R_{n,a-1}^{an,m}(\Cb);\Cb)\to H^{2a}_c(\Cb^a;\Cb)\to^\partial &H^{2a+1}_c((\Cb^{nm}-\Cb^1)\times\Cb^{a-1};\Cb)\\
            &\to H^{2a+1}_c(R_{n,a-1}^{an,m}(\Cb);\Cb)\to 0.
    \end{align*}
    It suffices to show that the boundary map is an isomorphism. To see this, consider the closed embedding
    \begin{align*}
        \Ab^1\times\Ab^{a-1}&\to \Ab^{nm}\times\Ab^{a-1}\\
        (z-\lambda,h)&\mapsto((z-\lambda)^n,\cdots,(z-\lambda)^n,h)
    \end{align*}
    where we view $\Ab^1\times\Ab^{a-1}$ as the variety of pairs of monic polynomials $(f,h)$ with $\deg(f)=1$ and $\deg(h)=a-1$, and where we view $\Ab^{nm}\times\Ab^{a-1}$ as the variety of $m+1$-tuples of monic polynomials
    \begin{equation*}
        (f_1,\cdots,f_m,h)
    \end{equation*}
    with $\deg(f_i)=n$ and $\deg(h)=a-1$. By inspection,
    \begin{equation*}
        \Ab^{nm}\times\Ab^{a-1}-\Ab^1\times\Ab^{a-1}\cong\Poly_n^{n,m}\times\Ab^{a-1}
    \end{equation*}
    and the assignments
    \begin{align*}
        (z-\lambda,h)&\mapsto(z-\lambda)h\\
        (f_1,\cdots,f_m,h)&\mapsto(f_1h^n,\cdots,f_mh^n)
    \end{align*}
    determine a map of cofiber sequences
    \begin{equation*}
        \xymatrix{
            (\Cb^1\times\Cb^{a-1})^+ \ar[r] \ar[d] & (\Cb^{nm}\times\Cb^{a-1})^+ \ar[r] \ar[d] & (\Poly_n^{n,m}(\Cb)\times\Cb^{a-1})^+ \ar[d]^\cong \\
            R_{n,a}^{an,m}(\Cb)^+ \ar[r] & R_{n,a-1}^{an,m}(\Cb)^+ \ar[r] & (\Poly_n^{n,m}(\Cb)\times\Cb^{a-1})^+
        }
    \end{equation*}%In the version of the paper we sent to Duke, we mislabeled this an (a-1)-fold branched cover.
    The left vertical map is an $a$-fold branched cover, so on the top degree of compactly supported cohomology, the map it induces is multiplication by $a$.  In particular, this gives an isomorphism in rational cohomology, and by the Five Lemma applied to the map of long exact sequences, we see that the cohomology of $R_{n,a-1}^{an,m}(\Cb)$ is as claimed.

    Finally, to see that \eqref{stabF} is an isomorphism for $d=an$ and $k=a-1$, we apply the Five Lemma to the map of long exact sequences induced by the continuous map of cofiber sequences
    \begin{equation*}
        \xymatrix{
            R_{n,a}^{an,m}(\Cb)^+ \ar[r] \ar[d] & R_{n,a-1}^{an,m}(\Cb)^+ \ar[r] \ar[d] & (\Poly_n^{n,m}(\Cb)\times\Cb^{a-1})^+ \ar[d]\\
            \ast \ar[r] & (R_{n,a-1}^{an-1,m}(\Cb)\times\Cb^m)^+ \ar[r] & (\Poly_n^{n-1,m}(\Cb)\times\Cb^{a-1}\times\Cb^m)^+
        }
    \end{equation*}
    The downward induction on $k$ now proceeds exactly as above, and this completes the proof of the claim.

    To conclude the inductive step of the theorem, we consider the map of cofiber sequences
    \begin{equation*}
        \xymatrix{
            R_{n,1}^{d,m}(\Cb)^+ \ar[r] \ar[d] & (\Cb^{dm})^+ \ar[r] \ar[d] & (\Poly_n^{d,m}(\Cb))^+ \ar[d]\\
            (R_{n,1}^{d-1,m}(\Cb)\times\Cb^m)^+ \ar[r] & (\Cb^{(d-1)m}\times\Cb^m)^+ \ar[r] & (\Poly_n^{d-1,m}(\Cb)\times\Cb^m)^+
        }
    \end{equation*}
    Applying the Five Lemma to the long exact sequence in cohomology, we see that the claim implies that the map \eqref{stabS} induces an isomorphism in compactly supported cohomology, and that
    \begin{equation}
    \label{eq:bettinums2}
        H^i_c(\Poly_n^{d,m}(\Cb);\Cb)\cong
            \left\{
                \begin{array}{ll}
                    \Cb & i=2(d-n)m+3\\
                    \Cb & i=2dm\\
                    0 & \text{else}
                \end{array}\right.
    \end{equation}

This establishes Statement (2) of the theorem.

\medskip
\noindent
{\bf Step 6 (Computing the weights): }
Statement (3) applied to \eqref{eq:bettinums2} gives that $H^i_{et,c}(\Poly_{n/_{\Fqbar}}^{d,m};\Qb_\ell)$ is one-dimensional for $i=2dm$ and $i=2(d-n)m+3$, and vanishes for all other
$i$.   Thus the trace of $\Frob_q$ on each of these cohomology groups is just the corresponding eigenvalue $\lambda_i$ of $\Frob_q$.  When $i=2dm$, Poincar\'{e} Duality implies that
$\lambda_{2dm}=q^{dm}$.   Plugging this information in to the Grothendieck-Lefschetz trace formula gives:

\[
\begin{array}{ll}
q^{dm}-q^{(d-n)m+1}=|\Poly_n^{d,m}(\F_q)|&=\lambda_{2dm}-\lambda_{2(d-n)m+3}\\
&\\
&=q^{dm}-\lambda_{2(d-n)m+3}
\end{array}
\]
which  implies that $\lambda_{2(d-n)m+3}=q^{(d-n)m+1}$, as claimed. This completes the proof of Statement (4) of the theorem.
\end{proof}

\section{The moduli space of  $m$-pointed rational curves in $\Pb^n$}
\label{section:M0m}

Fix $d,n\geq 1$ and $m\ge 0$.  For a variety $X$, let $\PConf_m(X)$ denote the set of ordered $m$-tuples of distinct points in $X$. Let

\begin{equation*}
    \Mc_{0,m}(\Pb^n,d):=(\PConf_m(\Pb^1)\times \Rat_d(\Pb^1,\Pb^n))/\PGL_2
\end{equation*}
where $\PGL_2$ acts diagonally. $\Mc_{0,m}(\Pb^n,d)$ is the {\em moduli space of $m$-pointed, degree $d$ rational curves in $\Pb^n$}. Note that the $m$ marked points on $\Pb^1$ give an {\em ordered} $m$-tuple.  Because $\PGL_2$ acts triply transitively on $\Pb^1$, if $m\ge 1$ then we can always take (a representative of) any element of $\Mc_{0,m}(\Pb^n,d)$ to have $\infty$ as the first marked point of the rational curve. There is then a map
\begin{equation*}
    \ev_\infty:\Mc_{0,m}(\Pb^n,d)\to \Pb^n
\end{equation*}
taking an equivalence class of marking and rational map $\phi:\Pb^1\to\Pb^n$ to $\phi(\infty)$.  It is straightforward to show that the map $\ev_\infty$ is a Zariski-locally trivial fibration with fiber
\begin{equation*}
    \Mc^*_{0,m}(\Pb^n,d):=(\PConf_{m-1}(\Ab^1)\times\Rat_d^*(\Pb^1,\Pb^n))/\Aff_1
\end{equation*}
where $\Aff_1$, the subgroup of $\PGL_2$ fixing $\infty\in\Pb^1$, acts diagonally. We thus have, for $m\ge 1$, and for any field $K$:
\begin{equation}
    \label{eq:M0mfibering1}
    [\Mc_{0,m}(\Pb^n,d)]=[\Mc^*_{0,m}(\Pb^n,d)]\cdot [\Pb^n] \in K_0(\Var_K)
\end{equation}
Our goal now is to compute the motive/point count, Betti numbers, and weight filtration of $\Mc^*_{0,m}(\Pb^n,d)$ for $m\ge 3$; we intend to return to the cases $m=1,2$ in a sequel.

\begin{proposition}\label{prop:iso1}
    For $m\ge 3$ there is an isomorphism
    \begin{equation*}
        \Mc^*_{0,m}(\Pb^n,d)\cong\PConf_{m-3}(\A^1-\{0,1\})\times\Rat_d^*(\Pb^1,\Pb^n).
    \end{equation*}
\end{proposition}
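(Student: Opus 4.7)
The plan is to exploit the simple transitivity of $\Aff_1$ on the configuration space $\PConf_2(\A^1)$ of ordered pairs of distinct points: for any $(p_2,p_3)\in\PConf_2(\A^1)$, the unique affine transformation sending $p_2\mapsto 0$ and $p_3\mapsto 1$ is $\alpha_{p_2,p_3}(z)=(z-p_2)/(p_3-p_2)$, and this depends algebraically on $(p_2,p_3)$. Since $m\ge 3$, one can use the first two marked points on $\A^1$ to rigidify the $\Aff_1$-action on the first factor of $Y:=\PConf_{m-1}(\A^1)\times\Rat_d^*(\Pb^1,\Pb^n)$.

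Concretely, let $Y':=\{(p_2,\ldots,p_m,\phi)\in Y : p_2=0,\ p_3=1\}$; the distinctness condition on configurations immediately identifies
\[Y'\cong\PConf_{m-3}(\A^1-\{0,1\})\times\Rat_d^*(\Pb^1,\Pb^n).\]
The plan is to show that the composition $Y'\hookrightarrow Y\onto Y/\Aff_1=\Mc^*_{0,m}(\Pb^n,d)$ is an isomorphism. I would construct an inverse by producing the $\Aff_1$-invariant morphism $\pi\colon Y\to Y'$ given by
\[\bigl((p_2,\ldots,p_m),\phi\bigr)\mapsto\bigl((\alpha_{p_2,p_3}(p_4),\ldots,\alpha_{p_2,p_3}(p_m)),\phi\circ\alpha_{p_2,p_3}^{-1}\bigr);\]
the explicit formula for $\alpha_{p_2,p_3}$ makes algebraicity transparent.

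The main (mild) obstacle is verifying the $\Aff_1$-invariance of $\pi$. For this I would check the identity $\alpha_{\beta(p_2),\beta(p_3)}=\alpha_{p_2,p_3}\circ\beta^{-1}$ for $\beta\in\Aff_1$, and combine it with the convention that $\beta$ acts on $\Rat_d^*$ by $\phi\mapsto\phi\circ\beta^{-1}$ to conclude $\pi(\beta\cdot(\vec p,\phi))=\pi(\vec p,\phi)$. Once invariance is established, $\pi$ descends to a morphism $\bar\pi\colon Y/\Aff_1\to Y'$, and checking that $\bar\pi$ and $Y'\hookrightarrow Y\onto Y/\Aff_1$ are mutually inverse is then routine: one composition is the identity because $\alpha_{0,1}=\mathrm{id}$, and the other because by construction $\alpha_{p_2,p_3}$ sends each $\Aff_1$-orbit into $Y'$ uniquely.
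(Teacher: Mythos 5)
Your proof is correct and follows essentially the same route as the paper's: both exploit the simple transitivity of $\Aff_1$ on ordered pairs of distinct points in $\A^1$ to build an $\Aff_1$-invariant retraction onto the slice $\{p_2=0,\ p_3=1\}$, then check the two induced maps are mutually inverse. The only difference is presentational (you phrase it via the slice $Y'$ and the closed immersion $Y'\hookrightarrow Y$, the paper writes the descended map $\overline{\Psi}$ directly), and your explicit identity $\alpha_{\beta(p_2),\beta(p_3)}=\alpha_{p_2,p_3}\circ\beta^{-1}$ makes the invariance check cleanly visible.
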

\begin{proof}
    For $m\ge 3$, we define a map
    \begin{equation*}
        \Psi: \PConf_{m-1}(\A^1)\times\Rat_d^*(\Pb^1,\Pb^n)\to \PConf_{m-3}(\A^1-\{0,1\})\times\Rat_d^*(\Pb^1,\Pb^n)
    \end{equation*}
    via
    \begin{equation*}
        \Psi((z_1,\ldots ,z_{m-1}),\phi):=(\beta(z_3),\ldots ,\beta(z_{m-1}),\phi\circ\beta^{-1})
    \end{equation*}
    where $\beta$ is the unique element of $\Aff_1$ so that $\beta(z_1)=0$ and $\beta(z_2)=1$. Note that we are making use of the simply transitive action of $\Aff_1$ on the space of pairs of distinct points in $\Ab^1$. Note also that for any $\alpha\in\Aff_1$:
    \begin{equation*}
        \Psi(\alpha\cdot (z_1,\ldots ,z_{m-1},\phi\circ\alpha^{-1})=\Psi((z_1,\ldots ,z_{m-1}),\phi)
    \end{equation*}
    and so $\Psi$ induces a map
    \begin{equation*}
        \overline{\Psi}: \Mc^*_{0,m}(\Pb^n,d)\to \PConf_{m-3}(\A^1-\{0,1\})\times\Rat_d^*(\Pb^1,\Pb^n).
    \end{equation*}
    The map $((z_3,\ldots,z_{m-1}),\phi)\mapsto ((0,1,z_3,\ldots,z_{m-1}),\phi)$ is an inverse to $\overline{\Psi}$, and so $\overline{\Psi}$ is an isomorphism.
\end{proof}

\subsection{Proof of Theorem~\ref{theorem:M0m}}

\begin{proof}[Proof of Theorem~\ref{theorem:M0m}]
    We begin with the motive/point count. Proposition \ref{prop:iso1} implies for $m\geq 3$ that:
    \begin{align*}
        [\Mc^\ast_{0,m}(\Pb^n,d)]=[\PConf_{m-3}(\Ab^1-\{0,1\})][\Ratb_{d,n}].
    \end{align*}

The following proposition was explained to us by N. Gadish.

\begin{proposition}[{\bf The class of \boldmath$\PConf_n(X)$}]
\label{prop:nir}
Let $X$ be a variety defined over $\Z$. Then for any field $K$:
    \begin{equation*}
        [\PConf_r(X)]=\prod_{i=0}^{r-1}([X]-i) \in K_0(\Var_K).
    \end{equation*}
\end{proposition}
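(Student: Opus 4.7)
The plan is to prove Proposition \ref{prop:nir} by induction on $r$, exploiting the ``forget the last point'' decomposition together with the scissor relations in $K_0(\Var_K)$. The base case $r=1$ is immediate since $\PConf_1(X) = X$, which matches the empty product of one term $[X] - 0 = [X]$.

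For the inductive step, I would consider the product $\PConf_{r-1}(X) \times X$, parametrized by tuples $(x_1,\ldots,x_{r-1}; y)$ with $x_1,\ldots,x_{r-1}$ pairwise distinct. Inside this variety, $\PConf_r(X)$ sits as the open subvariety where $y \notin \{x_1,\ldots,x_{r-1}\}$. The closed complement is the union of the $r-1$ diagonals
\begin{equation*}
\Delta_i := \{(x_1,\ldots,x_{r-1}; y) : y = x_i\}, \quad 1 \le i \le r-1.
\end{equation*}
Since $x_i \ne x_j$ on $\PConf_{r-1}(X)$, these $\Delta_i$ are pairwise disjoint closed subvarieties, and the projection forgetting $y$ identifies each $\Delta_i$ with $\PConf_{r-1}(X)$. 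Applying the scissor relation in $K_0(\Var_K)$ to this stratification yields
\begin{equation*}
[\PConf_{r-1}(X)] \cdot [X] = [\PConf_r(X)] + (r-1)\,[\PConf_{r-1}(X)],
\end{equation*}
and so
\begin{equation*}
[\PConf_r(X)] = [\PConf_{r-1}(X)] \cdot ([X] - (r-1)).
\end{equation*}
Combining with the inductive hypothesis $[\PConf_{r-1}(X)] = \prod_{i=0}^{r-2}([X]-i)$ gives the claim.

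The only subtlety (and the main point to verify carefully) is that the $\Delta_i$ are genuinely closed subvarieties of $\PConf_{r-1}(X)\times X$ and that they are mutually disjoint over $\PConf_{r-1}(X)$; this is what makes the scissor decomposition available without having to assume any Zariski-local triviality of the forgetful map $\PConf_r(X) \to \PConf_{r-1}(X)$. Once this is observed, the argument is purely formal in $K_0(\Var_K)$ and works over any field $K$, since the identities involve no point counting and are insensitive to whether $X$ is proper or smooth.
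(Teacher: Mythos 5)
Your proof is correct, and it takes a genuinely different route from the one in the paper. The paper argues indirectly: it first shows, via inclusion--exclusion over the stratification of the fat diagonal, that $[\PConf_r(X)]$ equals $P([X])$ for a polynomial $P$ whose coefficients depend only on the combinatorics of partitions of $\{1,\ldots,r\}$ (and not on $X$); it then pins down $P$ by applying the point-counting homomorphism $K_0(\Var_{\F_q})\to\Z$ and using the elementary count $|\PConf_r(X)(\F_q)|=\prod_{i=0}^{r-1}(|X(\F_q)|-i)$. Your argument instead produces the recursion $[\PConf_r(X)]=[\PConf_{r-1}(X)]\bigl([X]-(r-1)\bigr)$ directly in $K_0(\Var_K)$, from the stratification of $\PConf_{r-1}(X)\times X$ into the open piece $\PConf_r(X)$ and the $r-1$ pairwise disjoint closed graphs $\Delta_i\cong\PConf_{r-1}(X)$. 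The disjointness of the $\Delta_i$ over $\PConf_{r-1}(X)$, which you correctly flag as the crux, is exactly what lets you split $\bigl[\bigcup_i\Delta_i\bigr]=\sum_i[\Delta_i]$ without any Möbius-type bookkeeping. Your approach is more self-contained: it requires neither the ``universal polynomial'' observation nor passage to finite fields, and it makes the recursive structure of the answer transparent. The paper's approach is shorter to state (given the point-counting homomorphism) and illustrates a general-purpose technique --- determining classes in the Grothendieck ring by matching point counts once polynomiality is known --- but it is less explicit about the geometry. Either argument proves the proposition for arbitrary $K$.
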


\begin{proof} We prove this by induction on $r$. For $r=1$, there is nothing to show. Now assume we have shown it for $n<r$. Note
    that $\PConf_r(X)$ is the complement in $X^r$ of a union of diagonal subspaces isomorphic to $X^{r-1}$. All of the iterated intersections of these subspaces are again isomorphic to $X^i$ for $i<r$. By the inclusion--exclusion argument, this implies that $[\PConf_r(X)]$ is a polynomial in $[X]$ of degree $r$.

    The key observation is that this polynomial is independent of $X$: indeed, it depends only on the combinatorics of the natural stratification of the fat diagonal in $X^r$, i.e. only on the combinatorics of partitions of $\{1,\cdots,r\}$. But, this same argument gives that $|\PConf_r(X)(\F_q)|$ is the \emph{same} polynomial in $|X(\F_q)|$. Finally, by counting, we know that
    \begin{equation*}
        |\PConf_r(X)(\F_q)|=\prod_{i=0}^{r-1}(|X(\F_q)|-i).
    \end{equation*}
    This proves the proposition.
    \end{proof}

 Applying this formula with $X=\A^1-\{0,1\}$, so that $[X]=\Lb^1-2$, gives
    \begin{equation*}
        [\PConf_{m-3}(\A^1-\{0,1\})]=\prod_{i=2}^{m-2}(\Lb-i).
    \end{equation*}
    Combining this with the calculation for $[\Ratb_{d,n}]$, we obtain the result.

    For the Comparison Theorem, by Proposition \ref{prop:iso1}, the K\"unneth isomorphism, and Theorem \ref{thm:poly1}, we are reduced to producing an isomorphism
    \begin{equation*}
        H^\ast_{\et,c}(\PConf_{m-3}(\Ab^1-\{0,1\})_{/\Fqbar};\Qb_\ell)\otimes_{\Qb_\ell}\Cb\cong H^\ast_c(\PConf_{m-3}(\Cb-\{0,1\});\Cb).
    \end{equation*}
    Indeed, this follows from the same argument as for Theorem \ref{thm:poly1}: the variety $\PConf_{m-3}(\Ab^1-\{0,1\})$ is a complement of a hyperplane arrangement; the combinatorics of this arrangement are independent of the characteristic; and therefore, Bj\"orner--Ekedahl's results give the isomorphism.

    We now compute the weights. By Proposition \ref{prop:iso1}, Poinc\`are Duality and the K\"unneth isomorphism,
    \begin{align*}
        &H^i_{\et,c}(\Mc^\ast_{0,m}(\Pb^n,d)_{/\Fqbar};\Qb_\ell)\cong\\
        &\bigoplus_{a=0}^{2(m-3+d(n+1))-i}H^{2(m-3)-a}_{\et,c}(\PConf_{m-3}(\Ab^1-\{0,1\})_{/\Fqbar};\Qb_\ell)\otimes H^{i+a-2(m-3)}_{\et,c}(\Ratb_{d,n/\Fqbar};\Qb_\ell).
    \end{align*}
    The $\Ratb_{d,n}$ factor follows from Theorem \ref{thm:poly1}. It remains to compute the first factor. Recall that because $\PConf_{m-3}(\Ab^1-\{0,1\})$ is the complement of a hyperplane arrangement, its \'etale cohomology ring is generated in degree 1, and Frobenius acts on $H^i_{\et}(\PConf_{m-3}(\Ab^1-\{0,1\});\Qb_\ell)$ by $q^i$ \cite[Propositions 1, 2]{Ki}. Applying Poinc\`are duality, we see that Frobenius acts on $H^i_{\et}(\PConf_{m-3}(\Ab^1-\{0,1\});\Qb_\ell)$ by $q^{m-3-i}$.  It remains to compute the rank of the degree $i$ component, and by the Comparison Theorem, it suffices to do this using singular cohomology over $\Cb$.

    Ignoring the ring structure, the singular cohomology decomposes as a product
    \begin{equation*}
        H^\ast(\PConf_{m-3}(\Cb-\{0,1\});\Cb)\cong\bigotimes_{j=0}^{m-4}H^\ast(\Cb-\coprod_{j+2}\ast;\Cb).
    \end{equation*}
    Indeed, this follows from induction on $m$, and the Serre spectral sequence of the fibration
    \begin{equation*}
        \PConf_{m-3}(\Cb-\{0,1\})\to\PConf_{m-4}(\Cb-\{0,1\}).
    \end{equation*}
    The key observation is that because the fundamental group of the base acts trivially on the cohomology of the fiber, we have
    \begin{equation*}
        E_2^{p,q}=H^p(\PConf_{m-4}(\Cb-\{0,1\});\Cb)\otimes H^q(\Cb-\coprod_{m-4}\ast;\Cb),
    \end{equation*}
    and, because the fibration admits a section, the spectral sequence degenerates on this page.

    Using this decomposition, we see that the rank follows from the isomorphism of graded commutative rings
    \begin{equation*}
        H^\ast(\Cb-\coprod_i\ast;\Cb)\cong \Cb[x_1,\cdots,x_i]/<x_ax_b=0>
    \end{equation*}
    with $|x_a|=1$. In detail, we have
    \begin{align*}
        H^i_c(\PConf_{m-3}(\Cb-\{0,1\});\Cb)&\cong H^{2(m-3)-i}(\PConf_{m-3}(\Cb-\{0,1\});\Cb)\\
        &\cong\bigoplus_{i_0+\cdots+i_{m-4}=2(m-3)-i}\bigotimes_{j=0}^{m-4}H^{i_j}(\Cb-\coprod_{j+2}\ast;\Cb)\\
        &\cong\bigoplus_{\sigma}\bigotimes_{j=1}^{2(m-3)-i} H^1(\Cb-\coprod_{\sigma(j)+2}\ast;\Cb)\intertext{(where the sum is as in the definition of $\nu$ above)}
        &\cong\bigoplus_{\sigma}\Cb^{\prod_{j=1}^{2(m-3)-i}(\sigma(j)+2)}\\
        &\cong\Cb^{\sum_\sigma\prod_{j=1}^{2(m-3)-i}(\sigma(j)+2)}
    \end{align*}
    This completes the proof.
\end{proof}

\bigskip{\noindent
Dept. of Mathematics\\
University of Chicago\\
5734 S. University Avenue\\
Chicago, IL 60637\\
\\
E-mail:\\
farb@math.uchicago.edu\\

\bigskip{\noindent
Dept. of Mathematics\\
University of California - Irvine\\
Irvine, CA 92697\\
\\
E-mail:\\
wolfson@uci.edu
}

\end{document}